\newtheorem{definition}{Definition}[section]
\newtheorem{thm}{Theorem}[section]
\newtheorem{lemma}{Lemma}[section]
\newtheorem{proposition}{Proposition}[section]
\newtheorem{remark}{Remark}[section]
\def\bbc{\mathbb{C}}
\def\bbi{I}
\def\bbk{\mathbb{K}}
\def\bbl{\mathbb{L}}
\newcommand{\RR}{\mathbb{R}}
\newcommand{\KK}{\mathbb{K}}
\newcommand{\LL}{\mathbb{L}}
\newcommand{\continuum}{2^{\omega}}
\def\then{\rightarrow}
\def\ca{\mathcal{A}}
\newcommand{\B}[1]{{\rm B}[#1]}
\newcommand{\Borel}[1]{{\rm B}(#1)}
\newcommand{\Bplus}[1]{{\rm B}_{+}(#1)}
\def\<{\langle}
\def\>{\rangle}
\def\Cichon{Cicho{\'n}}
\def\Zeberski{{\.Z}eberski}
\def\Ralowski{Ra{\l}owski}
\begin{document}
\title{Remarks on nonmeasurable unions of big point families}
\author{Robert Ra{\l}owski}
\email{ralowski@im.pwr.wroc.pl}
\address{Institute of Mathematics and Computer Sciences, Wroc{\l}aw University of Technology, Wybrze\.ze Wyspia\'n\-skie\-go 27, 50-370 Wroc{\l}aw, Poland.}
\subjclass{Primary 03E35, 03E75; Secondary 28A99}%
\keywords{Lebesgue measure, Baire property, Cantor set, packing dimension, algebraic sum}%


\begin{abstract}
  We show that under some conditions on a family $\mathcal{A}\subset\bbi$ there exists a subfamily $\mathcal{A}_0\subset\mathcal{A}$ 
  such that  $\bigcup \mathcal{A}_0$ is nonmeasurable with respect to a fixed ideal $\bbi$ with Borel base of a fixed uncountable Polish space. 
  Our result applies to the classical ideal of null subsets of the real line and to the ideal of first category
  subsets of the real line.
\end{abstract}

\maketitle

\section{Introduction}
 We use standard set theoretical notations. Among others, we denote by $P(X)$ the family of all subsets of the set $X$.
We denote by $|X|$ the cardinality of the set $X$. We denote by $[X]^{<\omega}$ the set of all finite subsets of $X$, by $[X]^\omega$ the set of all countable subsets of $X$ and finally by $[X]^{\le\omega}$ the set of all at most countable subsets of $X$.
We denote by $\RR$ the real line. If $X$ is a topological space then we denote by $\Borel{X}$
the family of all Borel subsets of $X$.
Suppose that $I$ is a $\sigma$-ideal of subsets of $X$.
We denote by $\B{\bbi}$  the least $\sigma$-field containing
$\Borel{X} \cup \bbi$. Notice that $\B{I} = \{B \bigtriangleup A: B \in \Borel{X} \land A \in I\}$. We say that the set $A\subset X$ is measurable with respect to the ideal $I$ iff $A\in B[I]$.

Let $I$ be an ideal of subsets of a topological space $X$. 
We say that $I$ has a \textbf{Borel base}
if for any $A\in I$ there exists a Borel subset $B$ of $X$ such that $B \in I$ and $A \subseteq B$.
The ideal $\KK$ of the first category subsets of the real line $\RR$ has a Borel base, since every set
from $\KK$ can be covered by an $F_{\sigma}$ subset from $\KK$.
The ideal $\LL$ of Lebesgue measure subsets of the real line has a Borel base too, since every 
measure zero subset of the real line can be covered by a $G_{\delta}$ subset of measure zero.
The family $\B{\LL}$ is the family of Lebesgue measurable subsets of the real line
and $\B{\KK}$ coincides with the family of all subsets of the real line which has the Baire property.

\begin{definition}
 A pair $(X,\bbi)$ is a \textbf{Polish ideal space} 
 if $X$ is an uncountable Polish space, $\bbi\subseteq \mathrm{P}(X)$ is a proper $\sigma$-ideal 
 with a Borel base and $[X]^{\le \omega} \subseteq \bbi$. 
 A triple $(G,\bbi,+)$ is \textbf{Polish ideal group} if $(G,I)$ is a Polish ideal space, $(G,+)$ is abelian topological group and the ideal is translation invariant.
\end{definition}

Let $(X,\bbi)$ be a Polish ideal space. 
We denote by $\Bplus{\bbi}$ the family $\Borel{X}\setminus \bbi$. Notice that if $A \in \Bplus{\bbi}$ then $|A| = \continuum$. 

\begin{definition} 
  Let $\mathcal{X} = (X,\bbi)$ be a Polish ideal space. 
  We say that 
  a set $A \subseteq X$ is completely $\mathcal{X}$-nonmeasurable  if 
  $$
    (\forall B\in\Bplus{\bbi})( A\cap B\ne\emptyset\land A^c\cap B\ne\emptyset)~.
  $$
\end{definition}

Motivation of the above definition is as follows: let $A\subset \RR$ be any Lebesgue measurable set then we can find a two Borel sets let say $B_1,B_2\in\Borel{\RR}$ such that $B_1\subset A\subset B_2$ with $B_2\setminus B_1\in\bbl$. Then the inner Lebesgue measure is equal to outer Lebesgue measure. But in a case when $C\subset \RR$ is completely $(\RR,\bbl)$-nonmeasurable set then the set $C$ has the inner Lebesgue measure equal to $0$ and the set $C$ has full outer Lebesgue measure so the set $C$ is not Lebesgue measurable.

Let $X$ be an uncountable Polish space and let $\mathcal{X} = (X,[X]^{\le\omega})$.
Then $\mathcal{X}$ is a Polish ideal space.
A set $A \subseteq X$ is completely $\mathcal{X}$-nonmeasurable set if and only if 
$A$ is a classical Bernstein set.
Therefore the above definition is a generalisation of a classical property.

An ideal $I$ is c.c.c. if every family of pairwise disjoint non-empty $I$-positive Borel sets  is countable.
Now let $(X,I)$ be a Polish ideal space with $I$ c.c.c. and $A\subseteq X.$
Let $\mathcal{A}$ be a maximal family of pairwise disjoint $I$-positive Borel sets
contained in $A^c.$ Set $B=(\bigcup\mathcal{A})^c.$ Then $B$ is Borel, $A\subseteq B$ and for every Borel set $C\supseteq A$, $B\setminus C\in I.$ Any such set $B$ is called a {\it Borel envelope} of $A$ and will be denoted by $[A]_I.$ Note that a Borel envelope of $A$ is unique modulo $I$ and it is minimal (modulo $I$) Borel set containing $A.$

\begin{definition} Let $\mathcal{A}$ be a family of subsets of a Polish ideal space $(X,I)$. Then
\begin{enumerate}
\item $add( I )   = \min\{ |\mathcal{C}|:\;\mathcal{C}\subset  I \land \bigcup \mathcal{C}\notin  I  \}$.
\item $cov(\mathcal{A})   = \min\{ |\mathcal{B}|:\;\mathcal{B}\subset \mathcal{A}\land \bigcup \mathcal{B}=X \}$,
\item $cov_h(\mathcal{A}) = \min\{ |\mathcal{B}|:\;\mathcal{B}\subset \mathcal{A}\land (\exists B\in \Bplus{X})( B\subset \bigcup \mathcal{B}) \}$.
\end{enumerate}
\end{definition}

Notice that if $I$ is an ideal and $\mathcal{A} \subseteq I$ then $add(I) \leq cov_h(\mathcal{A})  \leq cov(\mathcal{A})$.
It is easy to see that  $cov_h(\KK)=cov(\mathcal{\KK})$ and $cov_h(\LL)=cov(\mathcal{\LL})$.

Suppose that $(G,+)$ is and abelian group $A,B\subseteq G$ and $g\in G$. Then
we put $A+g = \{a+g:a \in A\}$ and $A+B=\{ a+b \in G:\;\; a\in A \land b \in B\}$.
We call the set $A+B$ the algebraic sum of the sets $A$ and $B$.


\section{Nonmeasurable unions of null sets}

Suppose that $\mathcal{A}$ is a family of subsets of a set $X$ and let $x \in X$. We put
$\mathcal{A}_x = \{A \in \mathcal{A}: x\in A\}$.
We say that a family $\mathcal{A} \subseteq P(X) $ is \textbf{point-finite} if $|\mathcal{A}_x| < \omega$
for each $x \in X$.
The classical  Four Poles Theorem (see \cite{fourpoles}) says that if $(X,I)$ is a Polish ideal space
and $\mathcal{A} \subseteq I$ is a  point-finite family such that $\bigcup \mathcal{A} = X$ then
there exists a subfamily $\mathcal{B} \subseteq \mathcal{A}$ that $\bigcup \mathcal{B}$
is $(X,I)$-nonmeasurable.
The problem of the existence of subfamilies with a completely nonmeasurable unions was discussed in \cite{zeberski}, where a general theorem was proved for ccc $\sigma$-ideals with Borel base under the assumption of the non existence of quasi-measurable cardinal less or equal than continuum. 
In the ZFC theory the above problem was solved under some assumption on regularity of the family $\mathcal{A}$ (see \cite{twopoles}). 
The complete nonmeasurability of the union of ,,small-point'' family of subset was investigated in \cite{fivepoles}.
In this section we consider the unions of ,,big-point'' families of the subsets of Polish ideal spaces.

\begin{thm}\label{tw1} 
Let $(X,I)$ be a Polish ideal space. Suppose that a family $\ca   \subseteq I$ has the following properties:
\begin{enumerate}
\item $(\forall x\in X)( |\ca_x|=\continuum)$,
\item $(\forall x,y\in X$) ( $x\ne y\then |\ca_x \cap \ca_y|\le\omega)$,
\item $cov_h(\mathcal{A})=\continuum .$
\end{enumerate}
Then there exists a subfamily $\ca_0\subseteq \ca$ such that $\bigcup \ca_0$ is a completely $(X,I)$-nonmeasurable set.
\end{thm}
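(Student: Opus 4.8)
The plan is to build $\ca_0$ by a transfinite recursion of length $\continuum$, running through all the ``tests'' that complete nonmeasurability must pass. Since $X$ is Polish there are at most $\continuum$ Borel sets, so fix an enumeration $\Bplus{I}=\{B_\alpha:\alpha<\continuum\}$ (repeating entries if necessary to make the length exactly $\continuum$). By the definition of complete nonmeasurability, $\bigcup\ca_0$ will be completely $(X,I)$-nonmeasurable precisely when, for every $\alpha$, the set $\bigcup\ca_0$ meets $B_\alpha$ and its complement meets $B_\alpha$. So it suffices to arrange, for each $\alpha$, one point of $B_\alpha$ that lands in $\bigcup\ca_0$ and one point of $B_\alpha$ that is kept out of $\bigcup\ca_0$ forever.

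During the recursion I would maintain two increasing objects: the members chosen so far $\ca_0^{<\alpha}=\{A_\beta:\beta<\alpha\}$, and a set of ``forbidden'' points $D^{<\alpha}=\{d_\beta:\beta<\alpha\}$ that I have committed to leaving uncovered, each of size $\le|\alpha|+\omega<\continuum$. The invariant is that every member of $\ca_0^{<\alpha}$ is disjoint from $D^{<\alpha}$. At stage $\alpha$ I perform two steps. For the inner requirement, choose any $x_\alpha\in B_\alpha\setminus D^{<\alpha}$ (possible since $|B_\alpha|=\continuum>|D^{<\alpha}|$) and select a member $A_\alpha\in\ca$ through $x_\alpha$ disjoint from $D^{<\alpha}$, putting $A_\alpha$ into the family. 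For the outer requirement, note that $\ca_0^{<\alpha}\cup\{A_\alpha\}$ has size $<\continuum$, so by property (3) its union contains no $I$-positive Borel set; in particular it does not contain $B_\alpha$, and I may pick $d_\alpha\in B_\alpha$ not yet covered and declare it forbidden. All later members will be forced to avoid $d_\alpha$, so $d_\alpha$ remains outside $\bigcup\ca_0$ permanently, witnessing $(\bigcup\ca_0)^c\cap B_\alpha\ne\emptyset$, while $x_\alpha\in A_\alpha\cap B_\alpha$ witnesses $\bigcup\ca_0\cap B_\alpha\ne\emptyset$.

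The crux is the existence in the inner step of a member of $\ca$ through $x_\alpha$ disjoint from $D^{<\alpha}$, and this is exactly where properties (1) and (2) enter. By (1) we have $|\ca_{x_\alpha}|=\continuum$. A member of $\ca_{x_\alpha}$ is ruled out only if it also contains some $d\in D^{<\alpha}$; since $x_\alpha\notin D^{<\alpha}$ we have $x_\alpha\ne d$ for each such $d$, so by (2) at most countably many members of $\ca$ contain both $x_\alpha$ and $d$, i.e. $|\ca_{x_\alpha}\cap\ca_d|\le\omega$. Hence the forbidden members number at most $|D^{<\alpha}|\cdot\omega=\max(|D^{<\alpha}|,\omega)<\continuum$, and $\continuum$ members of $\ca_{x_\alpha}$ survive; any one of them may serve as $A_\alpha$. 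This choice preserves the invariant, and the subsequent addition of $d_\alpha$ (uncovered by $\ca_0^{<\alpha}\cup\{A_\alpha\}$) preserves it as well.

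After $\continuum$ stages set $\ca_0=\{A_\alpha:\alpha<\continuum\}$. Then each $B_\alpha\in\Bplus{I}$ meets $\bigcup\ca_0$ through $x_\alpha$, and meets the complement through $d_\alpha$: indeed any $A_\beta$ with $\beta>\alpha$ avoids $D^{<\beta}\supseteq\{d_\alpha\}$, while any $A_\beta$ with $\beta\le\alpha$ did not cover $d_\alpha$ by the way $d_\alpha$ was chosen. Thus $\bigcup\ca_0$ is completely $(X,I)$-nonmeasurable. I expect the only delicate points to be the bookkeeping ensuring that a forbidden point introduced at stage $\alpha$ is respected by every later member, which is handled by the monotonicity of $D$ together with the avoidance clause in the inner step, and the cardinal arithmetic $\max(|D^{<\alpha}|,\omega)<\continuum$, which holds for every $\alpha<\continuum$ with no regularity assumption on $\continuum$.
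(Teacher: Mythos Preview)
Your proof is correct and follows essentially the same transfinite recursion as the paper's own argument: enumerate $\Bplus{I}$, and at each stage choose a witness $x_\alpha\in B_\alpha$ together with a set $A_\alpha\in\ca_{x_\alpha}$ avoiding the accumulated forbidden points (using hypotheses (1) and (2)), and a forbidden point $d_\alpha\in B_\alpha$ missed by all sets chosen so far (using hypothesis (3)). The only cosmetic difference is that the paper selects $d_\alpha$ before $A_\alpha$ (so $A_\alpha$ is required to miss $d_\alpha$ as well), whereas you select $A_\alpha$ first and then pick $d_\alpha$ outside $A_\alpha$; both orderings preserve the invariant and yield the same conclusion.
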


\begin{proof} Let $\Bplus{I} = \{ B_\alpha:\alpha<\continuum \}$.
We will build a sequence 
$ ((A_\xi,d_\xi))_{\xi<\continuum}$ such that for all $\xi < \continuum$ we have $A_\xi \in \ca$, $d_\xi \in B_\xi$ and
\begin{enumerate}
\item  $(\forall \xi< \continuum)(A_\xi\cap B_\xi\ne\emptyset)$,
\item  $\{ d_\xi:\xi< \continuum \} \cap \bigcup_{\xi<\continuum} A_\xi=\emptyset$.
\end{enumerate}
Suppose that $\alpha < \continuum$, let $D_\alpha=\{ d_\xi:\;\; \xi<\alpha\}$ and that we have built a sequence $ ((A_\xi,d_\xi))_{\xi<\alpha}$
such that for all $\xi < \alpha$ we have $A_\xi \in \ca$, $d_\xi \in B_\xi$,
$(\forall \xi< \alpha)(A_\xi\cap B_\xi\ne\emptyset)$ and 
$\{ d_\xi:\xi< \alpha \} \cap \bigcup_{\xi<\alpha} A_\xi=\emptyset$.
Let $d_\alpha\in B_\alpha\setminus \bigcup_{\xi<\alpha} A_\xi$ what is possible by assumption (3) and $x_0\in B_\alpha\setminus(D_\alpha\cup\{ d_\alpha\})$. From assumption (2) we get
$$
  (\forall \xi\le \alpha) (|\ca_{x_0}\cap \ca_{d_\xi}|\le \omega) ,
$$
so, from assumption (1), we get
$$
  \ca_{x_0}\setminus \bigcup_{\xi\le\alpha}(\ca_{x_0}\cap\ca_{d_\xi})\ne\emptyset.
$$
Let us fix any  $A_\alpha \in \ca_{x_0}\setminus \bigcup_{\xi\le \alpha}(\ca_{x_0}\cap\ca_{d_\xi})$.
Then  $\{d_\xi:\xi\leq \alpha\} \cap A_\alpha = \emptyset$.
This show that there exists a sequence satisfying properties (1) and (2).

Finally, let us put $\ca_0=\{ A_\xi\in \ca:\;\;\xi<\continuum \}$. 
Let $B$ be any set from $\Bplus{X}$. Then there exists $\xi<\continuum$ such that
$B = B_\xi$. From (1) we deduce that $B_\xi \cap \bigcup \mathcal{A}_0 \neq \emptyset$
and from (2) we deduce that $B_\xi \setminus \bigcup \mathcal{A}_0 \neq \emptyset$.
Therefore $\bigcup \mathcal{A}_0$ is a completely $(X,I)$-nonmeasurable set.
\end{proof}


\begin{thm}\label{niemierzalne} 
Let $(X,I)$ be a Polish ideal space. Suppose that a family $\mathcal{A}\subset  I $ satisfies the following conditions:
\begin{enumerate}
\item $\bigcup\mathcal{A}=X$,
\item $(\forall x,y\in X)( x \neq y  \then |A_{x}\cap\mathcal{A}_{y}|  \le \omega )$,
\item $cov_h(\mathcal{A})=\continuum$. 
\end{enumerate}
Then there exists a subfamily $\mathcal{A}_0\subset\mathcal{A}$ such that $\bigcup\mathcal{A}_0$ is $(X,I)$-nonmeasurable.
\end{thm}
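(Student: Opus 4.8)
The plan is to produce $\ca_0$ whose union is nonmeasurable by forcing $\bigcup\ca_0$ to have full outer $I$-measure while keeping its complement $I$-positive. First I would isolate the working criterion: \emph{if $S\subseteq X$ meets every set in $\Bplus{I}$ and $X\setminus S\notin I$, then $S\notin\B{I}$.} Indeed, suppose $S=B\bigtriangleup N$ with $B$ Borel and $N\in I$; then $S\setminus B,\,B\setminus S\subseteq N$, and choosing (by the Borel base) a Borel $N'\in I$ with $N\subseteq N'$, the Borel set $(X\setminus B)\setminus N'$ is contained in $X\setminus S$. As $S$ meets every $I$-positive Borel set, this Borel set cannot be $I$-positive, so it lies in $I$; hence $X\setminus B\in I$ and therefore $X\setminus S=(X\setminus B)\bigtriangleup N\in I$, a contradiction. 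So it suffices to build $S=\bigcup\ca_0$ with these two features.

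For the construction I would enumerate $\Bplus{I}=\{B_\alpha:\alpha<\continuum\}$ and the co-null Borel sets as $\{C_\alpha:\alpha<\continuum\}$ (there are at most $\continuum$ of them), and build by recursion members $A_\alpha\in\ca$ together with reservoir points $r_\alpha$ so that $A_\alpha\cap B_\alpha\neq\emptyset$, $r_\alpha\in C_\alpha$, and $r_\beta\notin A_\alpha$ for all $\alpha,\beta<\continuum$. Then $S=\bigcup_{\alpha}A_\alpha$ meets every $B_\alpha$, so $S$ has full outer measure, while $R=\{r_\alpha:\alpha<\continuum\}\subseteq X\setminus S$ meets every co-null Borel set, so $R\notin I$ and hence $X\setminus S\notin I$; the criterion then finishes the proof.

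The reservoir step is easy and uses only (3): at stage $\alpha$ the set $\bigcup_{\xi\le\alpha}A_\xi$ is a union of fewer than $\continuum$ members of $\ca$, so by $cov_h(\ca)=\continuum$ it contains no $I$-positive Borel set; since $C_\alpha$ is co-null it is not covered, and a suitable $r_\alpha$ exists. The covering step is where assumption (2) enters and is, I expect, the main obstacle. The members of $\ca$ meeting $B_\alpha$ cover $B_\alpha$, hence there are at least $\continuum$ of them by $cov_h(\ca)=\continuum$; on the other hand, for a fixed point the members of $\ca$ through it that also hit the current reservoir $R_\alpha=\{r_\xi:\xi<\alpha\}$ number at most $|R_\alpha|\cdot\omega<\continuum$ by (2). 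Thus if I may choose a point $x\in B_\alpha$ lying in $\continuum$ members of $\ca$, some $A_\alpha\in\ca_x$ avoids $R_\alpha$; and if instead the reservoir points are chosen to lie in only countably many members, then $\{A\in\ca:A\cap R_\alpha\neq\emptyset\}$ has size $<\continuum$ and a set meeting $B_\alpha$ but disjoint from $R_\alpha$ survives.

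The delicate point --- the only place the argument must go beyond Theorem \ref{tw1} --- is guaranteeing that at every stage one of these two mechanisms is available simultaneously with the reservoir requirement. I would organize this by a dichotomy on the set $Z=\{x\in X:|\ca_x|=\continuum\}$. If $Z$ is co-null, I route every covering set through a point of $B_\alpha\cap Z$ exactly as in Theorem \ref{tw1}, the reservoir points being unconstrained; if $Z$ is small, the low-degree points carry an $I$-positive Borel set through which the reservoir can be chosen with countable degree, and the covering set is then found by the counting bound above. Verifying that these cases exhaust the possibilities, and that in the intermediate situations the reservoir can be kept $I$-positive without ever blocking the covering step, is the crux; it is precisely here that conditions (2) and (3) must be played against each other.
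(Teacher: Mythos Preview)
Your nonmeasurability criterion and the recursive scheme (hit every $B\in\Bplus{I}$ while maintaining a reservoir $R\subseteq X\setminus S$ with $R\notin I$) are correct, and the paper proceeds the same way. The gap is exactly where you flag it, and it is real: the dichotomy on $Z=\{x:|\ca_x|=\continuum\}$ does not close. In your second branch you need reservoir points of \emph{countable} degree to get $|\{A\in\ca:A\cap R_\alpha\ne\emptyset\}|\le|R_\alpha|\cdot\omega$, but $X\setminus Z$ only guarantees degree $<\continuum$; under $\neg\mathrm{CH}$ a reservoir point may have $|\ca_{r_\xi}|=\omega_1$, and when $\continuum$ is singular a sum of fewer than $\continuum$ such cardinals can reach $\continuum$, so the covering step may be blocked. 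Nor is there any reason the countable-degree points (if any exist at all) should meet every co-null Borel set once $\bigcup_{\xi\le\alpha}A_\xi$ is removed. Your ``intermediate situations'' are therefore not handled, and I do not see how to rescue the $Z$-dichotomy in ZFC.

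The paper uses a different split, not on degrees but on the behaviour of $\bigcup\ca_D$ for small $D$: either (P1) $\bigcup\ca_D\in I$ for every $D\subseteq X$ with $|D|<\continuum$, or (P2) some such $D$ has $\bigcup\ca_D\notin I$. Under (P1) no degree count is needed at all: at stage $\alpha$ the current reservoir $D_\alpha$ satisfies $\bigcup\ca_{D_\alpha}\in I$, so one picks $x_0\in B_\alpha\setminus\bigcup\ca_{D_\alpha}$, and then \emph{every} $A\in\ca_{x_0}$ already misses $D_\alpha$; since $\ca_{x_0}\ne\emptyset$ by (1), any such $A$ works. Under (P2) one fixes such a $D$. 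If $\bigcup\ca_D$ is nonmeasurable we are done; if it is measurable and $D\notin I$, a one-set-per-point selection $\{A_d:d\in D\}$ covers $D\notin I$ yet, having size $<\continuum=cov_h(\ca)$, contains no set from $\Bplus{I}$, hence is nonmeasurable. The remaining case $D\in I$ carries the missing idea: one \emph{restricts the whole family to} $\ca_D$, and then for every $y\notin D$ condition (2) yields the uniform bound
\[
|(\ca_D)_y|\;=\;\Bigl|\bigcup_{d\in D}(\ca_d\cap\ca_y)\Bigr|\;\le\;|D|\cdot\omega\;<\;\continuum .
\]
Keeping the reservoir inside the co-null set $X\setminus D$ now gives exactly the bound your Mechanism~2 wanted (with $|D|$ in place of $\omega$), and the induction runs inside the measurable $I$-positive set $\bigcup\ca_D$. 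This restriction trick---trading the unknown degrees of reservoir points for a uniform bound coming from a single small $D$---is what your outline is missing.
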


\begin{proof} 
Let $B_+(I)=\{ B_\xi:\;\xi<\continuum\}$.
For $Z\subset X$ we put $\mathcal{A}_Z= \{ A\in\mathcal{A}:A\cap Z\ne\emptyset\}$. 
Let us consider the two alternative 
possibilities:
\begin{description}
\item[P1] $(\forall D\subset X)(|D|<\continuum \then \bigcup\mathcal{A}_D\in    I) $,
\item[P2] $(\exists D\subset X)(|D|<\continuum \land \bigcup\mathcal{A}_D\notin I) $.
\end{description}

{\bf Case  P1}. 
We will construct by the transfinite induction a sequence $((A_\alpha,d_\alpha))_{\alpha<\continuum}$ with the following properties:
\begin{enumerate}
\item $(\forall \alpha < \continuum)(A_\alpha,d_\alpha)\in (\mathcal{A}\times B_\alpha)$,
\item $(\forall \alpha < \continuum)(A_\alpha  \cap B_\alpha \ne \emptyset)$,
\item $(\forall \alpha < \continuum)(\{d_\xi:\;\xi<\alpha\}\cap\bigcup_{\xi<\alpha} A_\xi=\emptyset)$.
\end{enumerate}
Suppose that $\alpha < \continuum$ and that we have constructed a partial sequence $((A_\xi,d_\xi))_{\xi<\alpha}$
with satisfies the above conditions (restricted to $\alpha$).
Let $D_\alpha = \{ d_\xi:\;\xi<\alpha\}$. 
Then $|D_\alpha|\le |\alpha| <\continuum$. Therefore by the condition {\bf P1} we have
$\bigcup\mathcal{A}_{D_\alpha}\in I$.
Let us fix $x_0\in B_\alpha\setminus\bigcup\mathcal{A}_{D_\alpha}\ne\emptyset$. 
Then $\mathcal{A}_{x_0}\cap\mathcal{A}_{D_\alpha}=\emptyset$ and $\mathcal{A}_{x_0}\ne\emptyset$. 
Let us choose any $A_\alpha\in\mathcal{A}_{x_0}$. Notice that $D_\alpha\cap A_\alpha=\emptyset$. 
Finally, let us choose $d_\alpha\in B_\alpha\setminus\bigcup_{\xi<\alpha+1} A_\xi$.
Then the sequence $((A_\xi,d_\xi))_{\xi<\alpha+1}$ also satisfies the the above three conditions.
This show that a required sequence exists. 
It is easy to check that the union of the family $\{A_\alpha:\alpha < \continuum\}$ is $(X,I)$-nonmeasurable.

{\bf Case P2}.
Let $D\subseteq X$ be a subset of $X$ such that $|D|<\continuum$ and $\bigcup\mathcal{A}_D\notin I $. 
If    $\bigcup\mathcal{A}_D$ is $(X,I)$-nonmeasurable then proof is finished. 
Suppose hence that the set  $\bigcup\mathcal{A}_D$ is measurable.
The there are two possibilities: $D \not \in I$ or $D\in I$.

Suppose first that $D\notin I$. Then $D$ is a nonmeasurable set, since otherwise $D$ would contain some perfect set, which is impossible ($|D|<\continuum$). 
For each $d\in D$ we choose some $A_d \in \mathcal{A}$ such that $d \in A_d$ and we put 
$\mathcal{A}_0 = \{A_d:d\in D\}$.
Then $D \subseteq \bigcup\mathcal{A}_0$, so $\bigcup\mathcal{A}_0 \notin I$.
Moreover $|\mathcal{A}_0|\le |D|<\continuum=cov_h(\mathcal{A})$, so for every
$B \in B_+(I)$ we have 
$B\setminus\bigcup\mathcal{A}_0\ne\emptyset$.
This implies that the set $\bigcup\mathcal{A}_0$ is $(X,I)$-nonmeasurable.

Suppose now that $D\in I $. Without loss of generality we may assume that $\mathcal{A}=\mathcal{A}_D$. 
Let $Z=\bigcup\mathcal{A}_D$ and let $\{ B_\xi:\xi<\continuum\} = (P(Z) \cap B(X)) \setminus I$.
We shall build a sequence $((A_\alpha,d_\alpha))_{\alpha<\continuum}$ such that:
\begin{enumerate}
\item $(\forall \xi<\continuum)(A_\xi \in \mathcal{A} \land d_\xi \in B_\xi \setminus D)$,
\item $(\forall \xi<\continuum)(A_\xi\cap B_\xi\ne\emptyset)$,
\item $(\forall \xi <\continuum)(\{ d_\beta:\beta<\xi\}\cap \bigcup_{\beta<\xi} A_\beta=\emptyset)$.
\end{enumerate}
Suppose that $\alpha<\continuum$ and that a sequence $((A_\xi,d_\xi))_{\xi<\alpha}$ satisfies
the above three conditions.
Let us observe that for any $\xi<\alpha$ we have $|\mathcal{A}_{d_\xi}|\le |\bigcup_{d\in D}\mathcal{A}_d\cap\mathcal{A}_{d_\xi}|\le |D|$,
so  
$|\mathcal{A}_{\{ d_\xi:\xi<\alpha\} }|\le |\alpha|\cdot |D|< \continuum$. 
Therefore  $B_\alpha\setminus (D\cup\bigcup\bigcup_{\xi<\alpha}  \mathcal{A}_{d_\xi})\ne \emptyset$.
Let us choose any 
$x_0\in B_\alpha\setminus (D\cup\bigcup\bigcup_{\xi<\alpha}  \mathcal{A}_{d_\xi})$ and 
$A_\alpha\in\mathcal{A}_{x_0}$. Then
$A_\alpha\cap \{ d_\xi:\; \xi<\alpha\}=\emptyset$ and  $\{d_\xi:\;\xi<\alpha\}\cap\bigcup_{\xi<\alpha+1} A_\xi=\emptyset$.
Finally, let us choose any $d_\alpha\in B_\alpha\setminus (D\cup\bigcup_{\xi<\alpha+1} A_\xi)$.
Then the sequence $((A_\xi,d_\xi))_{\xi<\alpha+1}$ satisfies the three inductive assumptions.
Therefore the sequence $((A_\alpha,d_\alpha))_{\alpha<\continuum}$ with the above three properties exists.
It is easy to check that  
the union of the family $\{A_\alpha:\alpha < \continuum\}$ is $(X,I)$-nonmeasurable. 
\end{proof}



\section{Nonmeasurable algebraic sums of null sets on Polish groups}

In this section we will consider nonmeasurability in abelian Polish groups. Namely, the union of the translations of some subset from some fixed $\sigma$ - ideal with Borel base which contains all singletons.

It is well known that so called Four Poles Theorem see \cite{fourpoles} is in some sense the best we can get in ZFC. In case of families that are not point-finite the complete $(X,I)$-nonmeasurability is independent of ZFC theory including countable point families also see \cite{fremlin}.  Then in general case we need additional set theoretic assumptions as in the paper \cite{fivepoles} for example:

\begin{thm}[$ZFC+cov( I )=\continuum$]\label{stare} Let $(X, I )$ be a Polish ideal space and let $\mathcal{A}\subset  I$ be such that:
\begin{enumerate}
\item $\bigcup\mathcal{A}=X$,
\item $\{ x\in X:\;\bigcup\mathcal{A}_x\notin  I \}\in  I $.
\end{enumerate}
Then there exists subfamily $\mathcal{A}_0\subset\mathcal{A}$ such that union $\bigcup\mathcal{A}_0$ is completely $(X,I)$-nonmeasurable.
\end{thm}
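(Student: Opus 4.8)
The plan is to enumerate the positive Borel sets as $\Bplus{I}=\{B_\alpha:\alpha<\continuum\}$ and to build, by transfinite recursion of length $\continuum$, a sequence of pairs $((A_\alpha,d_\alpha))_{\alpha<\continuum}$ with $A_\alpha\in\mathcal{A}$ and $d_\alpha\in B_\alpha$, maintaining two invariants: $A_\alpha\cap B_\alpha\neq\emptyset$ for every $\alpha$, and $\{d_\xi:\xi<\continuum\}\cap\bigcup_{\alpha<\continuum}A_\alpha=\emptyset$. Putting $\mathcal{A}_0=\{A_\alpha:\alpha<\continuum\}$, the first invariant forces $B\cap\bigcup\mathcal{A}_0\neq\emptyset$ for each $B\in\Bplus{I}$, while the second places each $d_\alpha\in B_\alpha$ in the complement of $\bigcup\mathcal{A}_0$, so that $B\setminus\bigcup\mathcal{A}_0\neq\emptyset$ as well; together these say exactly that $\bigcup\mathcal{A}_0$ is completely $(X,I)$-nonmeasurable. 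This is the same skeleton as in Theorems~\ref{tw1} and~\ref{niemierzalne}; the content lies entirely in making the successor step go through.

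Write $E=\{x\in X:\bigcup\mathcal{A}_x\notin I\}$, so that $E\in I$ by hypothesis~(2). The governing observation is that if $x\notin\bigcup\mathcal{A}_y$ then no member of $\mathcal{A}$ contains both $x$ and $y$, hence every $A\in\mathcal{A}_x$ misses $y$. I would therefore arrange, as a third invariant, that each $d_\xi$ lies outside $E$, so that $\bigcup\mathcal{A}_{d_\xi}\in I$. At stage $\alpha$, having built $((A_\xi,d_\xi))_{\xi<\alpha}$, I first choose
$$
 x_0\in B_\alpha\setminus\bigcup_{\xi<\alpha}\bigcup\mathcal{A}_{d_\xi}
$$
and let $A_\alpha$ be any member of $\mathcal{A}_{x_0}$, which is nonempty by hypothesis~(1); by the governing observation $A_\alpha$ meets $B_\alpha$ at $x_0$ and misses every $d_\xi$ with $\xi<\alpha$. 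I then choose
$$
 d_\alpha\in B_\alpha\setminus\Bigl(E\cup\bigcup_{\xi\leq\alpha}A_\xi\Bigr),
$$
which keeps $d_\alpha$ outside $E$ and outside every $A_\xi$ selected so far. A routine check, splitting according to whether $\xi\geq\eta$ or $\xi<\eta$, shows that these local disjointness conditions add up to the global invariant $\{d_\xi:\xi<\continuum\}\cap\bigcup_{\alpha<\continuum}A_\alpha=\emptyset$.

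Both selections are of the same shape: remove from the positive Borel set $B_\alpha$ a union of fewer than $\continuum$ members of $I$ — namely the sets $\bigcup\mathcal{A}_{d_\xi}\in I$ (legitimate precisely because $d_\xi\notin E$), together with $E\in I$ and the $A_\xi\in\mathcal{A}\subseteq I$ — and assert that what remains is nonempty. This is exactly where the hypothesis $cov(I)=\continuum$ is consumed, through the principle that a positive Borel set cannot be covered by fewer than $cov(I)$ members of $I$. I expect this localized covering statement to be the main obstacle: that fewer than $cov(I)$ members of $I$ fail to cover all of $X$ is immediate from the definition, but transferring this from $X$ to an arbitrary positive Borel $B_\alpha$ is the delicate step, and is where one uses that the covering number does not drop when the ideal is restricted to a positive Borel set, as holds in particular for the null ideal $\LL$ and the meager ideal $\KK$. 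Granting it, the recursion runs to length $\continuum$ and $\mathcal{A}_0=\{A_\alpha:\alpha<\continuum\}$ has completely $(X,I)$-nonmeasurable union.
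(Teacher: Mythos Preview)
The paper does not give its own proof of this theorem; it is quoted from \cite{fivepoles} as background, so there is nothing in the present paper to compare your argument against directly. That said, your proof follows exactly the transfinite-recursion template the paper uses for Theorems~\ref{tw1} and~\ref{niemierzalne}, and the key idea --- forcing every $d_\xi$ to lie outside the exceptional set $E=\{x:\bigcup\mathcal{A}_x\notin I\}$, so that each $\bigcup\mathcal{A}_{d_\xi}$ belongs to $I$ and can be thrown away at later stages --- is precisely the right one. The observation that $x_0\notin\bigcup\mathcal{A}_{d_\xi}$ forces every $A\in\mathcal{A}_{x_0}$ to miss $d_\xi$ is exactly the mechanism needed to propagate the disjointness invariant.

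You are also right to isolate the $cov$ versus $cov_h$ distinction as the one genuine pressure point. The hypothesis $cov(I)=\continuum$ literally says only that fewer than $\continuum$ sets from $I$ cannot cover all of $X$; your recursion needs that they cannot cover an arbitrary $B_\alpha\in\Bplus{I}$, which is $cov_h(I)=\continuum$. For the classical ideals the paper itself records $cov_h(\LL)=cov(\LL)$ and $cov_h(\KK)=cov(\KK)$, so all the applications in this paper go through; for a general Polish ideal space the step is not automatic, and one should read the hypothesis as $cov_h(I)=\continuum$ (equivalently, assume enough homogeneity of $I$ that the two invariants coincide). This is not a flaw in your strategy but a sharpening of the stated hypothesis, and you diagnosed it correctly.
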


But in special cases if we adopt some regularity conditions on our families then the results on nonmeasurability can be proved
in the theory ZFC (see \cite{twopoles}). In this section we will introduce the notion of tiny perfect set with respect to a family of subsets of the Polish space. This gives some regularity of the family of subsets $\mathcal{A}$ and finally $cov_h(\mathcal{A})=\continuum$.

Here we are would like present the probably well known lemma on translation. The proof of this lemma was inspired by Ryll-Nardzewski.

\begin{lemma}\label{waznylemat} 
Let $(G,\bbl)$ be a Polish ideal group with a Haar measure $\lambda$ and let $Perf(G)$ be a family of all perfect sets in $G$.
Then
$$
  (\forall P\in Perf(G))(\forall B\in\Bplus{G})(\exists x_0\in G)(|(P+x_0)\cap B|=\continuum).
$$
Moreover, if $P\in Perf(G)$ and $B\in\Bplus{G}$ then
$$
\lambda(\{ x\in G:\; |(P+ x_0)\cap B|=\continuum \})>0.
$$
\end{lemma}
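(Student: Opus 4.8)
The plan is to run a Fubini--Tonelli argument in the spirit of Ryll-Nardzewski. First I would put a convenient measure on $P$: since $P$ is perfect it contains a nonempty compact perfect subset $P'$, and $P'$ carries a nonatomic Borel probability measure $\mu$ (transport the standard product measure on a copy of the Cantor set sitting inside $P'$). Replacing $P$ by $P'$ only shrinks the translates $(P+x)\cap B$, so it suffices to prove both assertions for $P'$; likewise, using $\sigma$-finiteness of the Haar measure $\lambda$ on the Polish group $G$, I may replace $B$ by a Borel subset with $0<\lambda(B)<\infty$. Thus from now on $P$ is compact perfect, $\mu$ is a nonatomic probability measure concentrated on $P$, and $0<\lambda(B)<\infty$.

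Next I would consider the nonnegative function $(p,x)\mapsto \mathbf{1}_B(p+x)$ on $G\times G$, which is Borel because the group operation is continuous and $B$ is Borel. Applying Tonelli's theorem to the product of the finite measure $\mu$ and the $\sigma$-finite measure $\lambda$, and then invoking translation invariance of $\lambda$ (so that $\int_G \mathbf{1}_B(p+x)\,d\lambda(x)=\lambda(B)$ for each fixed $p$), I obtain
\[
  \int_G \mu\bigl(P\cap(B-x)\bigr)\,d\lambda(x)=\int_P\Bigl(\int_G \mathbf{1}_B(p+x)\,d\lambda(x)\Bigr)\,d\mu(p)=\int_P\lambda(B)\,d\mu(p)=\lambda(B)>0.
\]
In particular the slice function $g(x)=\mu\bigl(P\cap(B-x)\bigr)$, which is $\lambda$-measurable as part of the conclusion of Tonelli's theorem, is strictly positive on a set $S_0=\{x\in G:g(x)>0\}$ of positive $\lambda$-measure.

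Then I would convert positive $\mu$-measure of a slice into cardinality $\continuum$. For each $x\in S_0$ the set $P\cap(B-x)$ is Borel and has positive $\mu$-measure; since $\mu$ is nonatomic this set is uncountable, and being Borel it then has cardinality $\continuum$ by the perfect set property (equivalently, by inner regularity it contains a compact uncountable subset, which contains a perfect set). Because $p\mapsto p+x$ is a bijection carrying $P\cap(B-x)$ onto $(P+x)\cap B$, I conclude that $|(P+x)\cap B|=\continuum$ for every $x\in S_0$. This already supplies the witness $x_0$ demanded by the first assertion, and since $S_0\subseteq\{x\in G:|(P+x)\cap B|=\continuum\}$ and $\lambda(S_0)>0$, the ``moreover'' clause follows immediately.

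The main obstacle I anticipate is the measure-theoretic bookkeeping: one must check joint Borel measurability of $\mathbf{1}_B(p+x)$ and arrange $\sigma$-finiteness of $\lambda$ (harmless for Polish groups) so that Tonelli applies, and one must carry the reductions to $P'$ and to a finite-measure $B$ through both conclusions via the monotonicity $(P'+x)\cap B'\subseteq(P+x)\cap B$. The only genuinely substantive point is the passage from positive $\mu$-measure of a slice to full cardinality $\continuum$, which rests on the nonatomicity of $\mu$ together with the perfect set property of Borel sets; everything else is routine once the measure $\mu$ on $P$ has been fixed.
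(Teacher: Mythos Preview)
Your argument is correct and shares the same Fubini/convolution skeleton as the paper's proof: put a probability measure $\mu$ on $P$, compute
\[
\int_G \mu\bigl(P\cap(B-x)\bigr)\,d\lambda(x)=\lambda(B)>0,
\]
and read off that many slices are large. The one genuine difference lies in how the cardinality $\continuum$ is extracted from a positive-measure slice. The paper does not argue via nonatomicity of $\mu$; instead, having shown that the set $D\subseteq G\times P$ has positive $\lambda\times\mu$-measure, it invokes Eggleston's theorem to find perfect sets $P_1,P_2$ with $\lambda(P_1)>0$ and $P_1\times P_2\subseteq D$, so that the \emph{same} perfect set $P_2\subseteq P$ lies in $t+B$ for every $t\in P_1$. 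Your route through nonatomicity and the perfect set property of Borel sets is more elementary and entirely sufficient for the lemma as stated; the paper's use of Eggleston buys a bit more uniformity (one perfect witness working simultaneously for a positive-measure set of translates), at the cost of citing a nontrivial external result.
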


\begin{proof} 
Let $\mu$ be a regular measure defined on perfect set $P$ such that $\mu(P)=1$ and let 
$$
D = (G\times P)\cap (\bigcup\limits_{t\in\RR} \{ t\}\times (t + B))\subset G^2,
$$
where $B$ is a Borel set such that $\lambda(B)>0$. Let us observe that 
$$
  (t,s)\in D  \equiv  (s\in P) \land (\exists b\in B)(t+b=s ) \equiv (s\in P)\land (\exists b\in B)(t=s-b) ~,
$$
therefore
$$
  D=\bigcup_{s\in P} (\bigcup_{b\in B} s-b)\times \{ s\}=\bigcup_{s\in P} (s-B)\times \{ s\} ~.
$$
Let us consider the product measure $\lambda\times\mu$. Then we have:
\begin{align}
(\lambda\times\mu) (D)=&\int\limits_D 1\;d(\lambda\times\mu)(t,s)=\int\limits_P d(\mu)(s)\int\limits_{s+ B} d\lambda(t)=\int\limits_P d(\mu) \lambda(s+B)\\
=&\int\limits_P d(\mu)\lambda(B)=\lambda(B)\int\limits_P d(\mu)=\lambda(B)\mu(P)=\lambda(B)>0.
\end{align}
Then, by Egglestone theorem (see \cite{egglestone} and \cite{zeberski1})  
there are two perfect sets $P_1, P_2$ such that $\lambda(P_1)>0$ and $P_1\times P_2\subset D$. Note that
$$
  t\in P_1\then P_2\subset P\cap (t+B),
$$
so $\lambda(\{ t\in G:\;\;|P\cap (t+B)|=\continuum\})>0$.
\end{proof}

\begin{remark} 
  Let us note that there are other proofs of the above lemma. 
  First one was given by by Cicho{\'n} and uses the Shoenfield absoluteness argument. 
  The second one was due to Ryll-Nardzewski and is based on convolution measures and was an inspiration for the proof presented above. 
  Another one was due to Morayne, where density point of measure was used.
\end{remark}

\begin{definition} 
  Let $(G,I)$ be a Polish ideal group, and let $\mathcal{A}\subset\mathcal{P}(G)$. 
  A perfect set $P\subset G$ is a {\bf tiny perfect set with respect to $\mathcal{A}$} if
  $$
  (\forall t\in G)(\forall A\in\mathcal{A})( |(\{ t\}+P)\cap A|\le\omega ) .
  $$
\end{definition}

\begin{lemma}\label{covering} 
  Let $(G,I)$ be a Polish ideal group and let $\mathcal{A}\subset I$. Suppose that there exists a tiny set with respect to the family $\mathcal{A}$.
  Then $cov_h(\mathcal{A})=\continuum$.
\end{lemma}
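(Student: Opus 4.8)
The plan is to prove the lower bound $cov_h(\mathcal{A})\ge\continuum$; the reverse inequality $cov_h(\mathcal{A})\le\continuum$ is routine, since any subfamily of $\mathcal{A}$ whose union contains a set $B\in\Bplus{I}$ can be thinned to one of cardinality at most $|B|=\continuum$ by choosing, for each point of $B$, a single member of the family that covers it. So fix a tiny perfect set $P$ with respect to $\mathcal{A}$, and suppose towards a contradiction that some $\mathcal{B}\subseteq\mathcal{A}$ with $|\mathcal{B}|<\continuum$ satisfies $B\subseteq\bigcup\mathcal{B}$ for some $B\in\Bplus{I}$. The idea is to exhibit continuum many points of $B$ that cannot be distributed among fewer than $\continuum$ members of $\mathcal{A}$.

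The key step is to place a translate of the tiny set inside $B$ in a massive way. Since $B\in\Bplus{I}$ is a positive Borel set, Lemma~\ref{waznylemat}, applied to the perfect set $P$ and to $B$, produces a point $x_0\in G$ with $|(P+x_0)\cap B|=\continuum$. Write $C=(P+x_0)\cap B$, a set of cardinality $\continuum$ contained in $B\subseteq\bigcup\mathcal{B}$.

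Now the tininess of $P$ performs the counting. For every $A\in\mathcal{B}\subseteq\mathcal{A}$ the definition of a tiny perfect set, taken with $t=x_0$, gives $|(P+x_0)\cap A|\le\omega$, hence $|C\cap A|\le\omega$ as well. Since $C\subseteq\bigcup_{A\in\mathcal{B}}(C\cap A)$, I obtain
$$
  \continuum=|C|\le\sum_{A\in\mathcal{B}}|C\cap A|\le|\mathcal{B}|\cdot\omega<\continuum,
$$
the last inequality holding because $|\mathcal{B}|<\continuum$ (if $\mathcal{B}$ is infinite then $|\mathcal{B}|\cdot\omega=|\mathcal{B}|$, and if $\mathcal{B}$ is finite the product is $\omega$). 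This contradiction shows that no subfamily of size less than $\continuum$ can cover an $I$-positive Borel set, that is, $cov_h(\mathcal{A})\ge\continuum$; together with the easy upper bound this yields $cov_h(\mathcal{A})=\continuum$.

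The one delicate point, and the step I expect to carry the real weight, is the application of Lemma~\ref{waznylemat}: the whole argument reduces to knowing that \emph{some} translate of the fixed perfect set $P$ meets the positive Borel set $B$ in a set of full cardinality $\continuum$. This is precisely the content of the translation lemma, so the care required is in matching the notion of $I$-positivity encoded in $\Bplus{I}$ with the hypothesis of Lemma~\ref{waznylemat}; once such a translate is in hand, the remainder is pure cardinal arithmetic.
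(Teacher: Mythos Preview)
Your argument is correct and follows essentially the same route as the paper: apply Lemma~\ref{waznylemat} to obtain a translate $x_0+P$ meeting the given $B\in\Bplus{I}$ in $\continuum$ many points, then use tininess to bound $|(x_0+P)\cap\bigcup\mathcal{A}_0|\le|\mathcal{A}_0|\cdot\omega<\continuum$ for any small subfamily. The only cosmetic differences are that you phrase the lower bound as a contradiction while the paper argues directly, and you add a sentence on the upper bound which the paper leaves implicit; your closing caveat about matching $\Bplus{I}$ with the hypothesis of Lemma~\ref{waznylemat} is well-taken, since that lemma is stated for the measure ideal.
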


\begin{proof} 
  Let us consider any $I$ positive Borel set $B\in B_+(I)$ and any tiny perfect 
  set $P$ with respect to our family $\mathcal{A}\subset I$. Then there exists a translation $x_0+P$ of $P$ for some $x_0\in G$ such that
  $|(P+x_0)\cap B|=\continuum$ by Lemma \ref{waznylemat}. Let us choose any subfamily $\mathcal{A}_0\subset\mathcal{A}$ with $\kappa=|\mathcal{A}_0|<\continuum$. But for any $A\in\mathcal{A}_0$ $|A\cap (x_0+P)|\le\omega$ then
  $|\bigcup\mathcal{A}_0\cap (x_0+P)|\le \kappa\cdot\omega<\continuum$ and we have
  $(B\cap (x_0+P)\setminus\bigcup\mathcal{A}_0\ne\emptyset$.
\end{proof}

\begin{proposition}\label{prop1} 
  Let $(G,I)$ be a Polish abelian group and let $\mathcal{A}\subset  I $ be such that
  \begin{enumerate}
     \item $(\forall x\in G)(|\mathcal{A}_x|=\continuum)$,
     \item $(\forall x,y\in G)(x\ne y \then |\mathcal{A}_x\cap \mathcal{A}_y|\le\omega)$,
     \item there exists tiny perfect set $P$ with respect to the family $\mathcal{A}$.
  \end{enumerate}
  Then there exists subfamily $\mathcal{A}_0\subset \mathcal{A}$ 
  such that $\bigcup\mathcal{A}_0$ is completely $(G,I)$-nonmeasurable set in $G$.
\end{proposition}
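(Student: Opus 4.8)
The plan is to read this proposition as a direct corollary of Theorem~\ref{tw1}, with Lemma~\ref{covering} supplying the single hypothesis that is not already assumed outright. The first step is to line up the hypotheses: conditions (1) and (2) of the present proposition are literally conditions (1) and (2) of Theorem~\ref{tw1}, namely $(\forall x\in G)(|\mathcal{A}_x|=\continuum)$ and $(\forall x,y\in G)(x\ne y\then|\mathcal{A}_x\cap\mathcal{A}_y|\le\omega)$. Since a Polish ideal group is in particular a Polish ideal space, the only thing standing between us and an application of Theorem~\ref{tw1} is its third requirement, $cov_h(\mathcal{A})=\continuum$.

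Next I would extract that cardinal equality from hypothesis (3). By assumption there is a tiny perfect set $P$ with respect to the family $\mathcal{A}$, and $\mathcal{A}\subset I$; these are precisely the hypotheses of Lemma~\ref{covering}. Invoking that lemma yields $cov_h(\mathcal{A})=\continuum$. Here one tacitly uses that the ambient structure is a Polish ideal group in the sense of the definition, so that the translation argument underlying Lemma~\ref{covering} (through Lemma~\ref{waznylemat}, which needs the Haar measure $\lambda$) is genuinely available.

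Finally, with all three hypotheses of Theorem~\ref{tw1} now verified for $(G,I)$ and $\mathcal{A}$, I would apply that theorem directly. It produces a subfamily $\mathcal{A}_0\subseteq\mathcal{A}$ whose union $\bigcup\mathcal{A}_0$ is completely $(G,I)$-nonmeasurable, which is exactly the asserted conclusion.

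I do not expect a genuine obstacle in this argument: the mathematical content lies entirely in the two earlier results, and the proposition is a formal chaining of them. The one point deserving attention is purely bookkeeping, namely confirming that the ``Polish abelian group'' of the statement is indeed a Polish ideal group carrying a translation-invariant $\sigma$-ideal with Borel base (and a Haar measure), so that Lemma~\ref{covering}, and hence Lemma~\ref{waznylemat}, may legitimately be invoked. Once the structural hypotheses are matched, the proof reduces to two lines: apply Lemma~\ref{covering} to get $cov_h(\mathcal{A})=\continuum$, then apply Theorem~\ref{tw1}.
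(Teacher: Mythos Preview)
Your proposal is correct and matches the paper's own proof exactly: the paper simply states that the result follows from Theorem~\ref{tw1} and Lemma~\ref{covering}. Your additional remark about needing the Polish ideal group structure (and Haar measure) so that Lemma~\ref{waznylemat}, and hence Lemma~\ref{covering}, applies is a valid point of care that the paper leaves implicit.
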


\begin{proof} 
  The result follows from Theorem \ref{tw1} and Lemma \ref{covering}.
\end{proof}

\begin{proposition}\label{prop2} 
  Let $(G,I)$ be a Polish abelian group and let $\mathcal{A}\subset  I $ be such that
  \begin{enumerate}
  \item $\bigcup\ca=G$,
  \item $(\forall x,y\in G)(x\ne y\then |\mathcal{A}_x\cap \mathcal{A}_y|\le\omega)$,
  \item there exists tiny perfect set with respect to the family $\mathcal{A}$.
\end{enumerate}
Then there exists subfamily $\mathcal{A}_0\subset \mathcal{A}$ such that $\bigcup\mathcal{A}_0$ is $(G,I)$-nonmeasurable.
\end{proposition}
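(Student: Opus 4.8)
The plan is to recognize this proposition as the abelian-group specialization of Theorem~\ref{niemierzalne}, with the covering hypothesis supplied by the existence of a tiny perfect set. The two statements share the hypotheses $\bigcup\mathcal{A}=G$ and the almost-disjointness-at-points condition $(\forall x,y\in G)(x\ne y\then|\mathcal{A}_x\cap\mathcal{A}_y|\le\omega)$; the only gap between them is that Theorem~\ref{niemierzalne} asks directly for $cov_h(\mathcal{A})=\continuum$, whereas here we are instead handed a tiny perfect set with respect to $\mathcal{A}$.

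First I would close that gap using Lemma~\ref{covering}. Since $\mathcal{A}\subseteq I$ and hypothesis~(3) provides a tiny perfect set with respect to $\mathcal{A}$, Lemma~\ref{covering} yields at once $cov_h(\mathcal{A})=\continuum$. Concretely, this is the mechanism from the proof of that lemma: any translate $x_0+P$ meeting a fixed $I$-positive Borel set $B$ in a set of size $\continuum$ cannot be covered by fewer than $\continuum$ members of $\mathcal{A}$, because each member meets $x_0+P$ in at most countably many points.

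With $cov_h(\mathcal{A})=\continuum$ in hand, all three hypotheses of Theorem~\ref{niemierzalne} are satisfied verbatim, taking $X=G$: condition~(1) is $\bigcup\mathcal{A}=G$, condition~(2) is the almost-disjointness clause, and condition~(3) is the covering equality just established. Applying Theorem~\ref{niemierzalne} then produces a subfamily $\mathcal{A}_0\subset\mathcal{A}$ whose union is $(G,I)$-nonmeasurable, which is exactly the conclusion sought.

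I do not expect a genuine obstacle here; the content is entirely in Theorem~\ref{niemierzalne} and Lemma~\ref{covering}, and the argument is a one-line combination in the spirit of the proof of Proposition~\ref{prop1}. The only point requiring a moment's care is that Lemma~\ref{covering} (through Lemma~\ref{waznylemat}) presumes the group $(G,I)$ carries the Haar-measure structure making the translation argument available; one should confirm that the ambient assumptions on the Polish abelian group $(G,I)$ indeed make it a Polish ideal group, so that invoking Lemma~\ref{covering} is legitimate.
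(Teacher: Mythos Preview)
Your proposal is correct and matches the paper's own proof exactly: the paper simply states that the result follows from Theorem~\ref{niemierzalne} and Lemma~\ref{covering}. Your added remark about needing the Polish ideal group structure (so that Lemma~\ref{waznylemat} is available inside Lemma~\ref{covering}) is a reasonable caution, but this is tacit in the paper's hypotheses.
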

\begin{proof} 
  The result follows from Theorem \ref{niemierzalne} and Lemma \ref{covering}.
\end{proof}

We present some applications of the above Propositions.

\begin{thm}\label{proste} 
  Let $2\le n\in\omega$ be positive integer. Then there exists a family $\mathcal{L}$ of lines in $\RR^n$ 
  such that $\bigcup \mathcal{L}$ is a completely $(\RR^n,\bbl)$-nonmeasurable set.
\end{thm}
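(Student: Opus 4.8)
The plan is to apply Proposition~\ref{prop1} (or Proposition~\ref{prop2}) to a suitably chosen family $\mathcal{L}$ of lines in $\RR^n$, regarded as subsets of the Polish abelian group $(\RR^n,+)$ equipped with the $\sigma$-ideal $\bbl$ of Lebesgue-null sets. The central point is that a single line is a null subset of $\RR^n$ (since $n \ge 2$), so $\mathcal{L} \subseteq \bbl$ is automatic, and the three hypotheses of Proposition~\ref{prop1} all reduce to elementary incidence geometry together with one covering estimate supplied by the tiny-perfect-set machinery.

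First I would fix a construction of $\mathcal{L}$ guaranteeing the point and pairwise-intersection conditions. The natural choice is to take, for each point $x \in \RR^n$, the full pencil of lines through $x$; this pencil has cardinality $\continuum$, so condition~(1) of Proposition~\ref{prop1}, namely $(\forall x)(|\mathcal{L}_x| = \continuum)$, holds. For condition~(2), I would use that two distinct points $x \ne y$ lie on a common line if and only if that line is the unique line $\overline{xy}$ through both; hence $|\mathcal{L}_x \cap \mathcal{L}_y| \le 1 \le \omega$. Thus $\mathcal{L}$ being the family of \emph{all} lines in $\RR^n$ already satisfies~(1) and~(2), and I would simply take $\mathcal{L} = \{\text{all lines in } \RR^n\}$, or a subfamily large enough to preserve these two properties while still admitting a tiny perfect set.

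The key remaining step is condition~(3): exhibiting a tiny perfect set $P \subset \RR^n$ with respect to $\mathcal{L}$, i.e.\ a perfect set such that every translate $\{t\}+P$ meets every line in at most countably many points. The clean way to do this is to choose $P$ to be a perfect set that contains no three collinear points, or more strongly, meets every line in a finite (indeed at most one or two point) set; such \emph{perfect sets in general position} are classical and can be built by a standard transfinite or Cantor-scheme construction avoiding the countably-indexed family of lines determined by already-chosen points. Since the property ``meets every line in at most countably many points'' is translation-invariant for the family of \emph{all} lines (a translate of a line is again a line), tininess of $P$ with respect to $\mathcal{L}$ follows immediately from $P$ itself being in general position. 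This is the step I expect to be the main obstacle, as it requires verifying that a perfect set avoiding collinear triples actually exists in $\RR^n$ and that the general-position property transfers correctly to all translates.

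Once $P$ is in hand, Lemma~\ref{covering} yields $cov_h(\mathcal{L}) = \continuum$, so all hypotheses of Proposition~\ref{prop1} are met, and it produces a subfamily $\mathcal{L}_0 \subseteq \mathcal{L}$ whose union $\bigcup \mathcal{L}_0$ is completely $(\RR^n,\bbl)$-nonmeasurable. Since each element of $\mathcal{L}_0$ is a line, $\mathcal{L}_0$ is the desired family of lines, completing the proof. If instead one only wants plain nonmeasurability with a weaker point condition, the same argument via Proposition~\ref{prop2} works by replacing~(1) with $\bigcup \mathcal{L} = \RR^n$, which holds trivially since every point lies on some line.
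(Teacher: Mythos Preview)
Your approach is essentially the paper's: take $\mathcal{L}$ to be the family of all lines in $\RR^n$, verify conditions (1) and (2) of Proposition~\ref{prop1} by elementary incidence geometry, produce a tiny perfect set, and apply Proposition~\ref{prop1}.

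The one place you diverge is the step you flag as ``the main obstacle'': you propose manufacturing a perfect set in general position via a Cantor-scheme or transfinite construction. This works, but it is unnecessary. The paper simply takes $P$ to be the unit Euclidean sphere $S^{n-1}$. A line meets a sphere in at most two points, and since the family of all lines is translation-invariant, every translate $t+S^{n-1}$ also meets every line in at most two points. So the sphere is a tiny perfect set with respect to $\mathcal{L}$ for free, and what you anticipated as the crux of the argument is in fact a one-line observation. Your construction would be needed only if the family $\mathcal{L}$ were not translation-invariant or if no ``off-the-shelf'' perfect set with the required intersection property were available; here neither complication arises.
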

\begin{proof} 
Let $\mathcal{A}$ be family of all lines in $\RR^n$. Then for all $x,y\in\RR^n$ if $x\ne y$ then 
$$
|\{ l\in\mathcal{A}:\; x\in l\}\cap \{ l\in\mathcal{A}:\; y\in l\}|=1\leq\omega\text{ and }
|\{ l\in\mathcal{A}:\; x\in l\}| =\continuum.
$$
Let us observe that the unit Euclidean sphere $S$ is a tiny perfect set with respect to the family $\mathcal{A}$. 
So,  we get the theorem from Proposition \ref{prop1}.
\end{proof}

\begin{remark}[Given by referee] Even the a stronger statement (with parallel lines) is simple: if $A\subset \RR^{n-1}$ is completely nonmeasurable, then so is $A\times \RR$.
\end{remark}

\begin{thm}\label{okregi} 
  Let $\mathcal{A}$ be a family of circles with radius 1 on the plane $\RR^2$ such that $\bigcup\mathcal{A}=\RR^2$.
  Then there exists $\mathcal{A}_0\subset\mathcal{A}$ such that $\bigcup\mathcal{A}_0$ is a Lebesgue nonmeasurable (do not have the Baire property). Moreover, if additionally every point is covered $2^\omega$ many times then there exists $\mathcal{A}_0\subset\mathcal{A}$ such that $\bigcup\mathcal{A}_0$ completely $(\RR^2,\bbi)$-nonmeasurable set, where $\bbi=\bbl$ or $\bbi=\bbk$.
\end{thm}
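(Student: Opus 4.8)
The plan is to recognize the family of unit circles as a concrete instance of the abstract hypotheses of Propositions \ref{prop1} and \ref{prop2}, applied to the Polish ideal groups $(\RR^2,\bbl)$ and $(\RR^2,\bbk)$. First I would record that $(\RR^2,+)$ is an abelian Polish group and that both the null ideal $\bbl$ and the meager ideal $\bbk$ are translation-invariant proper $\sigma$-ideals with Borel base containing every countable set, so each $(\RR^2,\bbi)$ with $\bbi\in\{\bbl,\bbk\}$ is a Polish ideal group. Since a circle of radius $1$ is a smooth closed curve, it is Lebesgue-null and nowhere dense, hence $\mathcal{A}\subset\bbl\cap\bbk$, and the inclusion $\mathcal{A}\subset\bbi$ required by the propositions holds in both cases.

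The geometric core is the verification of the two remaining hypotheses. For the intersection condition, given distinct $x,y\in\RR^2$, a unit circle passing through both points has its centre at distance $1$ from each of $x$ and $y$; such centres lie in the intersection of the two unit circles centred at $x$ and at $y$, and two distinct circles meet in at most two points. Hence $|\mathcal{A}_x\cap\mathcal{A}_y|\le 2\le\omega$. For the tiny perfect set, I would take $P$ to be any perfect subset of a straight line, for instance the segment $[0,1]\times\{0\}$. For every $t\in\RR^2$ the translate $\{t\}+P$ is contained in a line, and a line meets a unit circle in at most two points, so $|(\{t\}+P)\cap A|\le 2\le\omega$ for every $A\in\mathcal{A}$; thus $P$ is tiny with respect to $\mathcal{A}$, and by Lemma \ref{covering} this already secures $cov_h(\mathcal{A})=\continuum$.

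With these facts in hand the conclusions are immediate. For the first assertion, the hypotheses $\bigcup\mathcal{A}=\RR^2$, the intersection bound, and the existence of a tiny perfect set are exactly conditions (1)--(3) of Proposition \ref{prop2}; applying it once with $\bbi=\bbl$ and once with $\bbi=\bbk$ produces a subfamily whose union is both Lebesgue nonmeasurable and without the Baire property. For the ``moreover'' part, the extra assumption that every point is covered $\continuum$ many times says precisely that $|\mathcal{A}_x|=\continuum$ for all $x\in\RR^2$, which together with the intersection bound and the tiny perfect set supplies all three hypotheses of Proposition \ref{prop1}; applying it for $\bbi\in\{\bbl,\bbk\}$ yields a subfamily $\mathcal{A}_0$ whose union is completely $(\RR^2,\bbi)$-nonmeasurable.

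I do not expect any serious obstacle: the whole argument reduces to two elementary incidence facts of plane geometry, namely that two distinct points lie on at most two unit circles and that a line meets a unit circle in at most two points, after which Propositions \ref{prop1} and \ref{prop2} do all the work. The only point demanding a little care is confirming that a perfect subset of a line genuinely satisfies the tininess condition for \emph{arbitrary} translations $t$, but this follows at once, since translation carries the line to a parallel line, which again meets each unit circle in at most two points.
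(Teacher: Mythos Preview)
Your proposal is correct and follows essentially the same approach as the paper, which simply says the argument is ``analogous to the previous one'' (Theorem~\ref{proste}) with Proposition~\ref{prop2} used for the first assertion and Proposition~\ref{prop1} for the ``moreover'' part. You have merely made explicit what the paper leaves implicit: the verification that $|\mathcal{A}_x\cap\mathcal{A}_y|\le 2$ and the choice of a tiny perfect set, for which your line segment is a perfectly good (and arguably the most natural) option.
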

\begin{proof} 
  The proof is analogous to the previous one, but here we use Proposition \ref{prop2} for the first statement.
\end{proof}

In the above example we have the situation where our family is a set of one dimensional circles. 
But under some additional assumptions we can get the assertion for arbitrary finite dimension of the spheres. 
Unfortunately here we cannot use our Propositions so proof will be a bit longer.

\begin{thm}\label{miara} 
  Let $n\in\omega$ be a fixed positive integer and let us us consider a
  family of $n-1$ dimensional Euclidean spheres 
  $\mathcal{A}\subset \{S(x,r):\;x\in\RR^n\land r>0\}$ with the following property:
$$
\forall x\in\RR^n\;\; \{ y\in\RR^n:\;\exists r>0\;\; x\in S(y,r)\in\mathcal{A}\}\in\B{\bbl}\setminus\bbl, 
$$
then there exists $\mathcal{A}_0\subset \mathcal{A}$ such that $\bigcup\mathcal{A}_0$ is completely $(\RR^n,\bbl)$-nonmeasurable.
\end{thm}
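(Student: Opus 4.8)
The plan is to run the same two-sided transfinite recursion as in the proof of Theorem~\ref{tw1}: enumerate $\Bplus{\LL}=\{B_\alpha:\alpha<\continuum\}$ and build a sequence $((A_\alpha,d_\alpha))_{\alpha<\continuum}$ with $A_\alpha\in\ca$, $d_\alpha\in B_\alpha$, $A_\alpha\cap B_\alpha\ne\emptyset$ and $\{d_\xi:\xi<\continuum\}\cap\bigcup_{\xi<\continuum}A_\xi=\emptyset$; then $\bigcup\{A_\alpha:\alpha<\continuum\}$ is completely $(\RR^n,\LL)$-nonmeasurable exactly by the final paragraph of that proof. Two preliminary facts are needed. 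First, writing $C_x=\{y\in\RR^n:\exists r>0\ x\in S(y,r)\in\ca\}$, the map $y\mapsto S(y,|x-y|)$ is a bijection of $C_x$ onto $\ca_x$; since the hypothesis gives $C_x\in\B{\LL}\setminus\LL$, i.e. $C_x$ is measurable and non-null, it has cardinality $\continuum$, hence $|\ca_x|=\continuum$. Second, a nondegenerate line segment $P\subset\RR^n$ is a tiny perfect set with respect to $\ca$: every translate $\{t\}+P$ is again a segment and meets each sphere in at most two points, so $|(\{t\}+P)\cap A|\le 2$ for all $t$ and all $A\in\ca$; Lemma~\ref{covering} then yields $cov_h(\ca)=\continuum$.

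The crucial new ingredient, replacing hypothesis~(2) of Theorem~\ref{tw1} (which fails here, as two distinct points of $\RR^n$ lie on continuum-many common spheres), is the following ZFC fact: \emph{in $\RR^n$ a union of fewer than $\continuum$ affine hyperplanes contains no Lebesgue-measurable set of positive measure.} I would prove it by choosing a direction $u$ transversal to all the given hyperplanes $\{H_i:i<\kappa\}$, $\kappa<\continuum$. Such $u$ exists: writing $v_i$ for a normal of $H_i$ and testing $u=(1,s,\dots,s^{n-1})$ along the moment curve, each condition $u\cdot v_i=0$ is a nontrivial polynomial equation in $s$ and so excludes only finitely many $s$, whence fewer than $\continuum$ values of $s$ are excluded in total. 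In coordinates adapted to $u$ each $H_i$ is the graph of an affine function over $u^{\perp}$, so every line $\ell$ in direction $u$ meets $\bigcup_i H_i$ in at most $\kappa<\continuum$ points. If a measurable $C$ of positive measure were contained in $\bigcup_i H_i$, then for almost every such $\ell$ the slice $C\cap\ell$ would be measurable (Fubini) and of size $<\continuum$, hence of measure zero (a measurable subset of $\RR$ of cardinality $<\continuum$ is null); Fubini would then force $C$ to be null, a contradiction.

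With these in hand the recursion proceeds as in Theorem~\ref{tw1}. At stage $\alpha$ put $D_\alpha=\{d_\xi:\xi<\alpha\}$, so $|D_\alpha|<\continuum$, and choose $x_0\in B_\alpha\setminus D_\alpha$. A sphere of $\ca$ through $x_0$ has center in $C_{x_0}$, and $S(y,|x_0-y|)$ passes through a point $d$ precisely when $|y-d|=|y-x_0|$, i.e. when $y$ lies on the perpendicular bisector hyperplane $H(x_0,d)=\{y:2y\cdot(d-x_0)=|d|^2-|x_0|^2\}$, a genuine hyperplane because $x_0\ne d$. As $C_{x_0}$ is measurable and non-null while $\bigcup_{d\in D_\alpha}H(x_0,d)$ is a union of $<\continuum$ hyperplanes, the lemma above furnishes some $y\in C_{x_0}\setminus\bigcup_{d\in D_\alpha}H(x_0,d)$; set $A_\alpha=S(y,|x_0-y|)\in\ca$, which meets $B_\alpha$ at $x_0$ and avoids every $d\in D_\alpha$. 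Finally choose $d_\alpha\in B_\alpha\setminus\bigcup_{\xi\le\alpha}A_\xi$, possible since $\{A_\xi:\xi\le\alpha\}$ has size $<\continuum=cov_h(\ca)$. The invariants are preserved exactly as in Theorem~\ref{tw1}, and $\bigcup\{A_\alpha:\alpha<\continuum\}$ is the desired completely $(\RR^n,\LL)$-nonmeasurable set.

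The main obstacle is the geometric lemma: one must know, in ZFC and with no extra hypothesis on cardinal invariants, that fewer than $\continuum$ hyperplanes cannot swallow a positive-measure set. What makes this go through is that we test the \emph{measurable} set $C_{x_0}$ against the slices (rather than the union of hyperplanes, which need not be measurable), so that the elementary implication ``measurable of cardinality $<\continuum$ $\Rightarrow$ null'' applies. The remaining bookkeeping — ensuring $x_0\notin D_\alpha$ so that the bisectors are honest hyperplanes, and noting that $y\in C_{x_0}$ automatically gives radius $|x_0-y|>0$ — is routine.
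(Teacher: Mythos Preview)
Your proof is correct and follows essentially the same route as the paper: the same two-sided transfinite recursion over an enumeration of $\Bplus{\LL}$, the same use of the perpendicular bisector hyperplanes $H(x_0,d_\xi)$ to locate a center $y\in C_{x_0}$ yielding a sphere through $x_0$ that misses all earlier $d_\xi$, and the same appeal to a line segment as a tiny perfect set for $\ca$ to obtain $cov_h(\ca)=\continuum$ and hence $d_\alpha$. The only difference is in justifying that fewer than $\continuum$ hyperplanes cannot swallow the positive-measure set $C_{x_0}$: the paper simply cites Lemma~\ref{covering} (implicitly needing, but not naming, a tiny perfect set for hyperplanes), whereas you give a direct Fubini argument via a transversal direction chosen on the moment curve --- this is slightly more self-contained and also handles cleanly the fact that $C_{x_0}$ is only in $\B{\LL}$ rather than Borel.
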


\begin{proof} 
Let us enumerate the set of all positive Borel sets $B_+(\LL)=\{ B_\alpha:\; \alpha<\continuum\}$ and for any $\alpha$ let us assume that we have defined a transfinite sequence:
$
\< (A_\xi,d_\xi)\in\mathcal{A}\times B_\xi:\;\;\xi<\alpha\>
$
with the following conditions:
\begin{enumerate}
  \item $(\forall\xi<\alpha)(A_\xi\cap B_\xi\ne \emptyset)$,
  \item $\{ d_\xi:\;\xi<\alpha\}\cap\bigcup \{A_\xi:\;\xi<\alpha\}=\emptyset$.
\end{enumerate}
Now let us choose any $x\in B_\alpha\setminus\{ d_\xi:\;\xi<\alpha\}$. 
For every $\xi<\alpha$ let $H_\xi$ be the perpendicular bisector hyperplane of the segment connecting $x$ and $d_\xi$. Let us consider the set
$$
\mathcal{H}=\{ H_\xi\subset\RR^n:\;\; \xi<\alpha \}.
$$
By Lemma \ref{covering} a Lebesgue positive set cannot be covered by $\bigcup\mathcal{H}$.
Then there exists a positive real $r>0$ and $y\in\RR^n$ such that $x\in S(y,r)\in\mathcal{A}$ and $S(y,r)\cap\{ d_\xi:\;\xi<\alpha\}=\emptyset$.

Since our family $\mathcal{A}$ is a tiny with respect to any line in $\RR^n$, by Lemma \ref{covering} there exists $d\in B_\alpha\setminus \bigcup_{\xi<\alpha+1} A_\xi$ and let $d_\alpha=d$. 
So we have built a sequence 
$
\< (A_\xi,d_\xi)\in\mathcal{A}\times B_\xi:\;\;\xi<\alpha+1\>
$
of length $\alpha+1$ which satisfies the following conditions:
\begin{enumerate}
\item $(\forall\xi<\alpha+1)(A_\xi\cap B_\xi\ne \emptyset)$,
\item $\{ d_\xi:\;\xi<\alpha+1\}\cap\bigcup{A_\xi:\;\xi<\alpha}=\emptyset$.
\end{enumerate}
Therefore, by the transfinite induction, we can have the analogous sequence with the length $\continuum$ 
and then putting $\mathcal{A}_0=\{ A_\xi:\;\xi<\continuum\}$ we get the assertion.
\end{proof}

From now we will consider nonmeasurability of Cantor-like sets where crucial role is played by packing dimension which is closely related to coverings of the real line see \cite{elekes_steprans},\cite{elekes_toth},\cite{darji}. 

It was proved in \cite{fivepoles} that there exists subset $A$ of the classical Cantor set $\bbc$ 
such that $A+\bbc$ is completely $([0,2],\bbl)$-nonmeasurable. This proof was done by the ultrafilter method. 
Here we give a new proof of this theorem under some additional set theoretical assumption.

\begin{thm}[$ZFC+cov(\mathcal{L})=\continuum$]\label{thm1} 
  Let $\mathcal{R}=\{ 0,1,2\}^\omega$ be the additive group with coordinatewise addition $\!\! \mod 3$ and let $\mathcal{C}_3 = \{0,2\}^\omega$ be the Cantor-like set. 
  Then there exists a set $A\subset\mathcal{C}_3$ such that $A+\mathcal{C}_3$ is completely $(\mathcal{R},\mathcal{L})$-nonmeasurable set.
\end{thm}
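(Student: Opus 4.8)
The strategy is to reduce Theorem \ref{thm1} to an application of the machinery developed above by exhibiting, inside the group $\mathcal{R}=\{0,1,2\}^\omega$, a family of translates of $\mathcal{C}_3$ to which Proposition \ref{prop1} or Theorem \ref{tw1} applies. The natural candidate for $\mathcal{A}_0$ is a subfamily of $\mathcal{A}=\{t+\mathcal{C}_3 : t\in\mathcal{R}\}$, so that $\bigcup\mathcal{A}_0 = A+\mathcal{C}_3$ for a suitable set of translations $A\subseteq\mathcal{R}$; one must then check that such an $A$ can be taken inside $\mathcal{C}_3$ itself. First I would verify that $(\mathcal{R},\mathcal{L},+)$ is a Polish ideal group, where $\mathcal{L}$ is the Haar-null ideal on the compact group $\mathcal{R}$ (the coordinatewise product measure), so that the structural hypotheses of the earlier results are in force.

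The crux is to establish the combinatorial intersection properties of the family $\mathcal{A}$ of translates of $\mathcal{C}_3$. The key observation is arithmetic: for $t,t'\in\mathcal{R}$ the translate intersections $(t+\mathcal{C}_3)\cap(t'+\mathcal{C}_3)$ are governed by the digitwise behavior mod $3$. Since $\mathcal{C}_3=\{0,2\}^\omega$, a point $x$ lies in $t+\mathcal{C}_3$ exactly when $x(n)-t(n)\in\{0,2\}$ for every coordinate $n$, i.e. $t(n)\in x(n)+\{0,2\} = \{x(n),x(n)+2\}$. Thus for a fixed $x$ there are exactly two admissible values of $t(n)$ at each coordinate, so the set $\mathcal{A}_x$ of translates through $x$ is itself a translate of $\{0,2\}^\omega$ and has cardinality $\continuum$; this gives condition (1) of Proposition \ref{prop1}. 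For distinct $x\ne y$, I would argue that $t+\mathcal{C}_3$ contains both $x$ and $y$ only for $t$ in the intersection of two such $\{0,2\}^\omega$-translates, and that this intersection is constrained to be small once $x$ and $y$ differ in infinitely many coordinates; the delicate point is to confirm the bound $|\mathcal{A}_x\cap\mathcal{A}_y|\le\omega$ of condition (2). The main obstacle is precisely here: if $x$ and $y$ differ in only finitely many coordinates the intersection of admissible translates can be large, so a naive application fails, and one must restrict the translation set $A$ (e.g. to a suitably chosen subset of $\mathcal{C}_3$ or use a transfinite construction directly) to avoid such coincidences.

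Because of this obstacle I expect the cleanest route is not to quote Proposition \ref{prop1} verbatim but to run the transfinite recursion of Theorem \ref{tw1} directly, building $A\subseteq\mathcal{C}_3$ coordinate-by-coordinate. At stage $\alpha<\continuum$ one enumerates $\Bplus{\mathcal{L}}=\{B_\alpha:\alpha<\continuum\}$ and, having committed to fewer than $\continuum$ many avoidance points $d_\xi$, chooses a translate $t_\alpha+\mathcal{C}_3$ (with $t_\alpha$ forced into $\mathcal{C}_3$) meeting $B_\alpha$ yet missing all previously fixed $d_\xi$, then picks a new $d_\alpha\in B_\alpha$ outside the union built so far. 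The hypothesis $cov(\mathcal{L})=\continuum$ guarantees $cov_h(\mathcal{A})=\continuum$ and so the set $\{d_\xi\}$ cannot swallow any positive Borel set, which keeps every $B_\alpha$ both met and missed. The verification that one can always select $t_\alpha\in\mathcal{C}_3$ rather than in all of $\mathcal{R}$ — reflecting that $A+\mathcal{C}_3$ must be an algebraic sum of a subset of the Cantor set with the Cantor set — is the technical heart, and it leans on the tiny-perfect-set covering principle of Lemma \ref{covering} together with the digitwise description of $\mathcal{A}_x$ above.
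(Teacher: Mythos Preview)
Your plan aims at the wrong tool. The paper does not reduce Theorem~\ref{thm1} to Theorem~\ref{tw1} or Proposition~\ref{prop1} at all; it applies Theorem~\ref{stare}, whose second hypothesis is the ``small union'' condition $\{x\in X:\bigcup\mathcal{A}_x\notin I\}\in I$ rather than the pairwise condition $|\mathcal{A}_x\cap\mathcal{A}_y|\le\omega$. With $\mathcal{A}=\{t+\mathcal{C}_3:t\in\mathcal{C}_3\}$ one first checks $\mathcal{C}_3+\mathcal{C}_3=\mathcal{R}$, and then the key digit computation is this: if $x(n)=0$ and $t\in\mathcal{C}_3$ with $x\in t+\mathcal{C}_3$, then necessarily $t(n)=0$, hence every element of $t+\mathcal{C}_3$ has its $n$-th digit in $\{0,2\}$. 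Thus for $x$ outside the null set $H=\{x:|\{n:x(n)=0\}|<\omega\}$ the whole union $\bigcup\mathcal{A}_x$ is trapped in $\{y:(\forall n\in D(x))\,y(n)\in\{0,2\}\}\in\mathcal{L}$, and Theorem~\ref{stare} finishes the argument in one line. This bypasses entirely the obstacle you yourself identified.

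You are right that the pairwise bound $|\mathcal{A}_x\cap\mathcal{A}_y|\le\omega$ fails --- indeed $\mathcal{A}_x\cap\mathcal{A}_y$ has size $\continuum$ whenever $x$ and $y$ agree outside finitely many coordinates and share infinitely many $2$'s --- so Proposition~\ref{prop1} is genuinely unavailable. Your fallback, running the recursion of Theorem~\ref{tw1} by hand, is left as a promissory note: you call the selection of $t_\alpha\in\mathcal{C}_3$ avoiding all earlier $d_\xi$ ``the technical heart'' and invoke Lemma~\ref{covering}, but that lemma only yields $cov_h(\mathcal{A})=\continuum$, which controls the choice of $d_\alpha$, not of $t_\alpha$. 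Concretely, for a target point $x_0\in B_\alpha$ the admissible translates $t\in\mathcal{C}_3$ with $x_0\in t+\mathcal{C}_3$ are free only on the coordinates where $x_0(n)=2$; if some $d_\xi$ also has digit $2$ on all those coordinates, no choice of $t$ in that fiber avoids $d_\xi$. One can try to wriggle out by varying $x_0$ within $B_\alpha$, but nothing in your outline supplies the needed counting or avoidance argument. The paper's route via Theorem~\ref{stare} sidesteps all of this: it never needs to select an individual translate against fewer-than-$\continuum$ many forbidden points, because the ``$\bigcup\mathcal{A}_x\in\mathcal{L}$ for conull-many $x$'' condition packages the avoidance globally.
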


\begin{proof} 
  Notice that $\mathcal{C}_3+\mathcal{C}_3=\mathcal{R}$. Moreover $\mathcal{C}_3$ 
  and their translations are null sets. Therefore we have to check that the second  condition in Theorem \ref{stare} holds. 
  Let us observe that $H=\{ x\in\mathcal{R}:\; |\{ n\in\omega:\; x(n)=0\}|<\omega\}\in\mathcal{L}$. 
  Let $\mathcal{A}=\{ t+\mathcal{C}_3:\; t\in\mathcal{C}_3\}$ and $D(x)=\{ n\in\omega:\; x(n)=0\}$. 
  Let $x\in \mathcal{R}\setminus H$ and let us suppose that $t\in\mathcal{C}_3$ is such that $x\in t+\mathcal{C}_3$. 
  Then for every $n\in D(x)$ we have $t(n)=0$ ($2+0=2$ and $2+2=1$) and for every $c\in\mathcal{C}_3$ we have $t(n)+c(n)\in\{ 0,2\}$. 
  But $|D(x)|=\omega$ for every $x\in\mathcal{R}\setminus H$, so $\{ y\in\mathcal{R}:\; \forall n\in D(x)\; y(n)\in\{ 0,2\} \}\in\mathcal{L}$, 
  hence
  $$
    \bigcup \mathcal{A}_x\subset \{ y\in\mathcal{R}:\; \forall n\in D(x)\; y(n)\in\{ 0,2\} \}\in\mathcal{L}
   $$
  for every $x\in\mathcal{R}\setminus H$. 
  Let us observe that 
  $\mathcal{R}\setminus H\subset \bigcup_{x\in\mathcal{R}\setminus H}\mathcal{A}_x$ 
  and that the set $\bigcup_{x\in\mathcal{R}\setminus H}\mathcal{A}_x$ has a full measure. 
  So the second condition of the Theorem \ref{stare} is satisfied, which finishes the proof.
\end{proof}

Using the same method we can prove an analogous theorem for the classical Cantor subset of the real line
in the theory $ZFC+cov(\bbl)=\continuum$.

\begin{thm}[Darji, Keleti \cite{darji}] 
  If a set $C\subset\RR$ has packing dimension 
  less than $1$ then for any positive measure Borel set $B$ 
  and any $T\subset\RR$ such that $|T|<\continuum$ we have $B\setminus (T+C)\ne\emptyset$.
\end{thm}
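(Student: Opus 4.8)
The plan is to deduce the theorem from the machinery already developed, \emph{not} to argue about $T+C$ directly, but to exhibit a single perfect set that is tiny with respect to the family of all translates of $C$ and then invoke Lemma \ref{covering}. Work in the Polish ideal group $(\RR,\LL,+)$ (Lebesgue measure is Haar measure, so Lemma \ref{waznylemat} and hence Lemma \ref{covering} apply) and set $\ca=\{t+C:\;t\in\RR\}$. Since a set of packing dimension less than $1$ has Hausdorff dimension less than $1$ and is therefore Lebesgue null, every member of $\ca$ is null, so $\ca\subseteq\LL$. A perfect set $P$ is tiny with respect to $\ca$ exactly when $|P\cap(C+u)|\le\omega$ for every $u\in\RR$, because $(\{t\}+P)\cap(C+v)$ is a translate of $P\cap(C+(v-t))$. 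Granting such a $P$, Lemma \ref{covering} gives $cov_h(\ca)=\continuum$; unwinding the definition of $cov_h$, this says that no subfamily of $\ca$ of cardinality $<\continuum$ contains a positive measure Borel set. Taking $\mathcal B=\{t+C:\;t\in T\}$, whose union is $T+C$ and whose size is $\le|T|<\continuum$, we conclude $B\not\subseteq T+C$, i.e.\ $B\setminus(T+C)\ne\emptyset$, for every $B\in\Bplus{\LL}$. Thus the whole theorem reduces to producing one tiny perfect set.

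First I would replace $C$ by countably many compact pieces of small box dimension. Using the modified box-counting characterisation of packing dimension, $\dim_P C<1$ yields a countable cover $C=\bigcup_i F_i$ with $\sup_i\overline{\dim}_B F_i<d$ for some fixed $d<1$. Passing to closures leaves the upper box dimension unchanged, and intersecting with the balls $[-m,m]$ produces a countable family of compact sets $D_j$, each with $\overline{\dim}_B D_j\le d<1$, whose union still contains $C$. Since $|P\cap(C+u)|\le\sum_j|P\cap(D_j+u)|$, it suffices to build a single perfect $P$ with $|P\cap(D_j+u)|\le\omega$ for all $j$ and all $u$. The \emph{uniform} bound $d<1$ across all pieces is what will let one fixed set of scales serve every $D_j$ at once.

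The core is the construction of $P$ as a Cantor scheme tuned to $d$. Fix $s\in(d,1)$ and $\theta\in(1,1/s)$, and build $P\subseteq[0,1]$ with $2^k$ intervals $J_\sigma$ $(\sigma\in 2^k)$ at level $k$, each of length $L_k=2^{-\theta k}$ and pairwise separated by gaps $\ge L_k$; since $\theta>1$ the total length $2^{k+1}L_k\to 0$, so these intervals fit in $[0,1]$ with room to spare. Now fix $j,u$. The set $(D_j+u)\cap P$ is closed ($D_j$ is compact), so were it uncountable it would contain a perfect subset, and hence for some level $c$ it would meet at least $2^{k-c}$ of the level-$k$ intervals for all large $k$. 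On the other hand, because $\overline{\dim}_B D_j<s$, for small $L_k$ the translate $D_j+u$ is covered by at most $L_k^{-s}$ intervals of length $L_k$, and as the level-$k$ intervals of $P$ are $L_k$-separated each such covering interval meets at most two of them; hence $D_j+u$ meets at most $2L_k^{-s}=2\cdot 2^{s\theta k}$ level-$k$ intervals. Comparing the two counts forces $2^{k-c}\le 2\cdot 2^{s\theta k}$, i.e.\ $k-s\theta k\le c+1$, which fails for large $k$ since $s\theta<1$. Therefore $(D_j+u)\cap P$ is countable for all $j,u$, so $P$ is tiny and the reduction above finishes the proof.

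The main obstacle is exactly this balancing of scales. One must keep the tree wide enough that a genuine perfect subset is forced to populate exponentially many level-$k$ intervals, while keeping the intervals large enough (relative to $2^{-k}$) that the box-dimension covering bound $L_k^{-s}$ stays below $2^k$. The two requirements are the constraints $\theta>1$ (the scheme fits in an interval and stays $L_k$-separated) and $s\theta<1$ (the exponential count beats the covering count); they are jointly satisfiable precisely because $s<1$, and the uniformity of $d$ from the packing-dimension decomposition is what allows one fixed $\theta$ to handle every compact piece $D_j$ simultaneously.
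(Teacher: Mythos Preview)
The paper does not supply its own proof of this statement; it is quoted from \cite{darji} as an external result and then used as a black box. So there is no in-paper argument to compare against.

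Taken on its own, your reduction is sound: if some perfect $P$ has $|P\cap(C+u)|\le\omega$ for every $u\in\RR$, then Lemma~\ref{covering} applied to $\ca=\{t+C:t\in\RR\}\subseteq\LL$ immediately gives $cov_h(\ca)=\continuum$, which is exactly the assertion. The decomposition of $C$ into countably many compact pieces $D_j$ with $\overline{\dim}_B D_j\le d<1$ is also correct, as is the covering count showing that $D_j+u$ meets at most $2\cdot 2^{s\theta k}$ of the $2^k$ level-$k$ intervals of your scheme.

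The gap is the countability step. You claim that if $(D_j+u)\cap P$ were uncountable it would contain a perfect subset, ``and hence for some level $c$ it would meet at least $2^{k-c}$ of the level-$k$ intervals for all large $k$.'' This inference fails. A perfect subset of a binary Cantor scheme corresponds to a perfect subtree of $2^{<\omega}$, i.e.\ one in which every node has two \emph{incomparable} extensions, not one in which every node splits immediately; such trees can be arbitrarily thin level by level. Concretely, inside your $P$ the set of branches $x\in 2^\omega$ with $x(n)=0$ whenever $n$ is not a power of $2$ is a perfect set, yet it meets only about $k$ of the $2^k$ level-$k$ intervals. So the intended comparison $2^{k-c}\le 2\cdot 2^{s\theta k}$ never gets off the ground. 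In fact the bound you do establish only says that $(D_j+u)\cap P$ has upper box dimension at most $s<1$, and uncountable compact sets of box dimension $0$ are plentiful. Either a different scheme for $P$ (e.g.\ one whose branching at level $k$ genuinely outruns the covering number $L_k^{-s}$, which a binary tree cannot do) or an entirely different argument is needed; as written, tininess of $P$ is not established.
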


The above theorem is a generalisation of Gruenhage's theorem about the classical  Cantor set. 
It is important to note that assumption on packing dimension is necessary (see \cite{elekes_steprans}, \cite{elekes_toth}).

Let $\mathcal{C}_4\subset[0,1]$ be a Cantor-like set constructed  as follows: 
\begin{enumerate}
 \item remove from the interval $[0,1]$ the open segment $(\frac{1}{4},\frac{3}{4})$,
 \item do the same with the remaining segments infinitely many times.
\end{enumerate}

It is easy to check that $\dim_{pack} (\mathcal{C}_4)=\frac{1}{2}$, 
thus by Darji-Keleti Theorem (see \cite{darji}) we need continuum many translates of 
the set $\mathcal{C}_4$ to cover any set of positive Lebesgue measure.

\begin{proposition}[ZFC+$cov(\bbl)=2^\omega$]
  There exists a set $A\subset \frac{1}{2}\mathcal{C}_4$ such that $A+\mathcal{C}_4$ 
  is a $([0,\frac{3}{2}],\bbl)$-completely nonmeasurable subset of the interval $[0,\frac{3}{2}]$.
\end{proposition}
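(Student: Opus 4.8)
The plan is to apply Theorem \ref{stare} in exactly the same way as in the proof of Theorem \ref{thm1}, working in the Polish ideal space $(X,\bbl)$ with $X=[0,\tfrac32]$ and the family of translates
$$
  \mathcal{A}=\{a+\mathcal{C}_4:\;a\in\tfrac12\mathcal{C}_4\}.
$$
Since $\mathcal{C}_4$ is null and $\bbl$ is translation invariant, every member of $\mathcal{A}$ is a null subset of $[0,\tfrac32]$, so $\mathcal{A}\subset\bbl$; it then remains only to verify the two hypotheses of Theorem \ref{stare}, its hypothesis $cov(\bbl)=\continuum$ being the standing assumption. Once both are checked, Theorem \ref{stare} yields $\mathcal{A}_0\subset\mathcal{A}$ with $\bigcup\mathcal{A}_0$ completely $([0,\tfrac32],\bbl)$-nonmeasurable, and putting $A=\{a\in\tfrac12\mathcal{C}_4:\;a+\mathcal{C}_4\in\mathcal{A}_0\}$ gives $\bigcup\mathcal{A}_0=A+\mathcal{C}_4$, the desired set.

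The heart of the argument will be a base-$4$ digit computation. Writing $\mathcal{C}_4=\{\sum_{n\ge1}a_n4^{-n}:a_n\in\{0,3\}\}$, so that $\mathcal{C}_4=3\cdot\{\sum b_n4^{-n}:b_n\in\{0,1\}\}$ and $\tfrac12\mathcal{C}_4=\tfrac32\cdot\{\sum c_n4^{-n}:c_n\in\{0,1\}\}$, a typical element of $\tfrac12\mathcal{C}_4+\mathcal{C}_4$ is
$$
  \tfrac32\sum_{n\ge1}c_n4^{-n}+3\sum_{n\ge1}b_n4^{-n}=\tfrac32\sum_{n\ge1}(c_n+2b_n)4^{-n}.
$$
As $(c_n,b_n)$ ranges over $\{0,1\}^2$ the digit $c_n+2b_n$ ranges bijectively over $\{0,1,2,3\}$, so the sum set equals $\tfrac32\{\sum d_n4^{-n}:d_n\in\{0,1,2,3\}\}=\tfrac32[0,1]=[0,\tfrac32]$, which is the first hypothesis $\bigcup\mathcal{A}=X$. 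Reading the identity backwards, a representation $x=a+c$ with $a\in\tfrac12\mathcal{C}_4$, $c\in\mathcal{C}_4$ corresponds to a base-$4$ expansion of $\tfrac{2x}{3}\in[0,1]$, and the bijection $d_n\leftrightarrow(c_n,b_n)$ makes each expansion yield exactly one pair $(a,c)$; since a real has at most two base-$4$ expansions, it follows that $|\mathcal{A}_x|\le2$ for every $x\in[0,\tfrac32]$.

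Consequently $\bigcup\mathcal{A}_x$ is a union of at most two translates of $\mathcal{C}_4$, hence null, for every $x$, so $\{x:\bigcup\mathcal{A}_x\notin\bbl\}=\emptyset\in\bbl$ and the second hypothesis holds trivially; this is the same mechanism exploited for $\mathcal{C}_3$ in Theorem \ref{thm1}, only sharper. The step I expect to require the most care is the bookkeeping around the countably many base-$4$-adic ambiguities, which are precisely what turn ``exactly one'' representation into ``at most two''; since they form a countable, hence null, set, they disturb neither hypothesis. I should note that the fact $\dim_{pack}(\mathcal{C}_4)=\tfrac12<1$ is not consumed by this route but is the conceptual reason $\mathcal{C}_4$ is admissible: through the Darji--Keleti theorem it guarantees that $\continuum$ translates are needed to cover any positive-measure set, and this is what powers a self-contained alternative. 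That alternative is a direct transfinite Bernstein-type construction in the style of Theorem \ref{miara}: enumerate $\Bplus{\bbl}$ as $\{B_\alpha:\alpha<\continuum\}$, and at stage $\alpha$ use Darji--Keleti to pick $d_\alpha\in B_\alpha$ outside the fewer-than-$\continuum$ translates chosen so far, while Darji--Keleti also supplies $\continuum$ candidate base points whose translate meets $B_\alpha$ and the bound $|\mathcal{A}_x|\le2$ ensures that avoiding the previously chosen $d_\xi$ rules out fewer than $\continuum$ of them, leaving a usable $a_\alpha$.
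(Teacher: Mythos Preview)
Your proposal is correct and follows essentially the same route as the paper's proof: apply Theorem~\ref{stare} to the family $\mathcal{A}=\{t+\mathcal{C}_4:\;t\in\tfrac12\mathcal{C}_4\}$ after checking that $\bigcup\mathcal{A}=[0,\tfrac32]$ and that $\mathcal{A}$ is point-finite (so that condition~(2) of Theorem~\ref{stare} holds trivially). The only difference is that where the paper cites \cite{ciesielski}, Lemma~7, for the finiteness of the representation set, you supply an explicit base-$4$ digit argument giving the sharper bound $|\mathcal{A}_x|\le 2$; the alternative Bernstein-type construction you sketch at the end is extra and not needed.
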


\begin{proof}({\it Sketch}) 
First of all one can prove that $\mathcal{C}_4+\frac{1}{2}\mathcal{C}_4=[0,\frac{3}{2}]$ 
and that for any $x\in [0,\frac{3}{2}]$ the set $\{ (a,b)\in \mathcal{C}_4\times \frac{1}{2}\mathcal{C}_4: x=a+b\}$ is finite
(see \cite{ciesielski} Lemma 7 for details). 
Let $\mathcal{A}=\{ t+\mathcal{C}_4:\; t\in\frac{1}{2}\mathcal{C}_4 \}\subset\bbl$ then $\mathcal{A}$ is point-finite family.
Therefore, by Theorem \ref{stare}, 
there exists $\mathcal{A}_0\subset\mathcal{A}$ 
such that $\bigcup \mathcal{A}_0$ is completely $([0,\frac{3}{2}],\bbl)$-nonmeasurable in the interval $[0,\frac{3}{2}]$.
\end{proof}

\noindent{\bf Acknowledgement} 
Author is very indebted to Professor Ryll-Nardzewski for an idea of the proof of Lemma \ref{waznylemat}, 
Professors Jacek Cicho{\'n} and Micha{\l} Morayne for critical remarks. Author would like to thank the referee for finding serious mistakes in an earlier version of this work and for giving valuable suggestions.


\vskip 1cm

\end{document}